\documentclass{amsart}

\usepackage{setspace}

\usepackage[all]{xy}
\usepackage{tikz,tikz-cd}
\usetikzlibrary{shapes, matrix, arrows}
\usepackage{adjustbox}
\usepackage{eqparbox}
\makeatletter

\usepackage{subcaption}

\newcommand{\rvline}{\hspace*{-\arraycolsep}\vline\hspace*{-\arraycolsep}}

\usepackage{graphicx}
\usepackage{multirow}
\usepackage{amsmath,amssymb,amsfonts}
\usepackage{amsthm}
\usepackage{mathrsfs}
\usepackage[title]{appendix}
\usepackage{xcolor}
\usepackage{textcomp}
\usepackage{manyfoot}
\usepackage{booktabs}
\usepackage{algorithm}
\usepackage{algorithmicx}
\usepackage{algpseudocode}
\usepackage{listings}
\usepackage{tabstackengine}

\theoremstyle{thmstyleone}
\newtheorem{theorem}{Theorem}
\newtheorem{proposition}[theorem]{Proposition}

\theoremstyle{thmstyletwo}

\newtheorem{remark}{Remark}

\theoremstyle{thmstylethree}

\newcommand\Z{\mathbb{Z}}
\newcommand\Q{\mathbb{Q}}
\newcommand\R{\mathbb{R}}

\raggedbottom

\copyrightinfo{2023}{Machiel van Frankenhuijsen, Edward K. Voskanian}


\begin{document}

\title[]{A note on the quality of simultaneous Diophantine approximations obtained by the LLL algorithm}

\author{Machiel van Frankenhuijsen}
\address{Department of Mathematics, Utah Valley University \newline  
\indent Orem, Utah 84058, USA}
\email{vanframa@uvu.edu}

\author{Edward K. Voskanian}
\address{Department of Mathematics, Norwich University \newline 
\indent Northfield, Vermont 05663, USA}
\email{evoskani@norwich.edu}

\keywords{Diophantine approximation, simultaneous Diophantine approximation, LLL Algorithm, algorithm of Lenstra, Lenstra and Lov\'asz.}

\begin{abstract}
In 1982, A. K. Lenstra, H. W. Lenstra, and L. Lovász introduced the first polynomial-time method to factor a nonzero polynomial $f \in \mathbb{Q}[x]$ into irreducible factors. This algorithm, now commonly referred to as the LLL Algorithm, can also be applied to compute simultaneous Diophantine approximations. We present a significant improvement of a result by Bosma and Smeets on the quality of simultaneous Diophantine approximations achieved by the LLL Algorithm.
\end{abstract}

\maketitle

\section{Preliminaries} \label{sec:preliminaries}

\subsection{Simultaneous Diophantine Approximation} \label{SDA}
\hfill \\
The continued fraction of an irrational number $a$ produces infinitely many rational numbers $p/q$ such that
\[ |qa-p|<\frac1q. \]
Using $\|x\|$ to denote the distance of the real number $x$ to the nearest integer, the above result can be expressed as
\[ q\|qa\|<1. \]
Dirichlet proved the following theorem about simultaneously approximating several real numbers.
\begin{theorem}\label{T: max}
Let\/ $A$ be an $n\times m$ matrix with real entries such that the first row together with\/~$1$ is independent over\/ $\Q,$
\[ [1\ a_{11}\ \dots\ a_{1m}]\quad \text{ is independent over\/ }\Q. \]
Then there exist infinitely many sets of coprime integers\/ $(q_1,\dots, q_m)$ such that
\[ q=\max_{j\leq m}|q_j|\geq1 \]
and
\[ q^{m/n}\biggl\|\sum_{j=1}^ma_{ij}q_j\biggr\|<1\quad\text{ for all\/ }i\leq n. \]
\end{theorem}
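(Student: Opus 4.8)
The plan is to prove the theorem via Dirichlet's box principle applied to the $n$-dimensional torus. Fix an integer $N\ge 1$ and set $R=\lfloor N^{n/m}\rfloor\ge 1$. To each of the $(R+1)^m$ vectors $(q_1,\dots,q_m)\in\{0,1,\dots,R\}^m$ I associate the point of $[0,1)^n$ whose $i$-th coordinate is the fractional part of $\sum_{j=1}^m a_{ij}q_j$, and I partition $[0,1)^n$ into $N^n$ half-open subcubes of side $1/N$. Since $R+1>N^{n/m}$, we have $(R+1)^m>N^n$, so the box principle yields two distinct vectors that are sent to the same subcube.

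Subtracting these two vectors gives a nonzero integer vector, which I again denote $(q_1,\dots,q_m)$, with $\max_j|q_j|\le R$ and, for each $i\le n$, with $\sum_{j=1}^m a_{ij}q_j$ differing from an integer by less than $1/N$; hence $\bigl\|\sum_{j=1}^m a_{ij}q_j\bigr\|<1/N$. Writing $q=\max_j|q_j|$, we have $1\le q\le R\le N^{n/m}$, so $q^{m/n}\le N$, and therefore
\[
q^{m/n}\biggl\|\sum_{j=1}^m a_{ij}q_j\biggr\|<N\cdot\frac1N=1\qquad\text{for all }i\le n,
\]
which is exactly the required bound. So every $N$ produces a nonzero integer solution of the inequality.

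To obtain infinitely many distinct solutions I would use the hypothesis that $[1\ a_{11}\ \cdots\ a_{1m}]$ is independent over $\Q$: this makes $\sum_{j=1}^m a_{1j}v_j$ irrational, hence $\bigl\|\sum_{j=1}^m a_{1j}v_j\bigr\|>0$, for every nonzero integer vector $(v_1,\dots,v_m)$. If the construction above returned only finitely many distinct vectors, the minimum of $\bigl\|\sum_j a_{1j}q_j\bigr\|$ over them would be a positive number $\delta$, contradicting $\bigl\|\sum_j a_{1j}q_j\bigr\|<1/N$ as soon as $N>1/\delta$; so infinitely many distinct solution vectors occur.

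The remaining requirement---that infinitely many of these can be taken with $\gcd(q_1,\dots,q_m)=1$---is the step I expect to be the main obstacle. The inequality is not homogeneous in $(q_1,\dots,q_m)$, so one cannot simply divide a solution by the gcd of its entries: the distance-to-the-nearest-integer function need not decrease under such scaling. A separate argument is needed, either by refining the box-principle step so that the two colliding vectors differ by a primitive vector, or by showing directly---again invoking the $\Q$-independence of the first row---that the infinite family of solutions cannot be exhausted by the multiples of finitely many primitive vectors. One could alternatively replace the box principle throughout by Minkowski's convex body theorem applied to a suitable lattice in $\R^{m+n}$; this shifts the bookkeeping but not the essential difficulty.
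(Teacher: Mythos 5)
The paper states this result as classical background, attributed to Dirichlet, and gives no proof of it, so there is no internal proof to compare your argument against; I will assess it on its own terms. Your pigeonhole argument is the standard proof and is correct as far as it goes: the count $(R+1)^m > N^n$ is right, the difference of two vectors landing in the same half-open subcube is a nonzero integer vector with $\bigl\|\sum_j a_{ij}q_j\bigr\| < 1/N$ for every $i$ and $q=\max_j|q_j|\le N^{n/m}$, hence $q^{m/n}\bigl\|\sum_j a_{ij}q_j\bigr\|<1$; and your use of the $\Q$-independence of $[1\ a_{11}\ \cdots\ a_{1m}]$ to get $\bigl\|\sum_j a_{1j}q_j\bigr\|>0$ for every nonzero integer vector, and hence infinitude of the solution set, is exactly the standard way to rule out finiteness.

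On the coprimality point you flag: you are right that the inequality is not homogeneous in $(q_1,\dots,q_m)$ alone, but the literal reading of the statement cannot be the intended one, since for $m=1$ ``coprime $(q_1)$'' would force $q_1=\pm 1$, whereas the paper immediately afterwards treats the case $m=1$ as producing infinitely many denominators $q$. The classical statement (and surely the intended one here) imposes coprimality on the full tuple $(q_1,\dots,q_m,p_1,\dots,p_n)$, where $p_i$ is the nearest integer to $\sum_j a_{ij}q_j$. The linear forms $\sum_j a_{ij}q_j-p_i$ \emph{are} homogeneous in that tuple, so dividing by $g=\gcd(q_1,\dots,q_m,p_1,\dots,p_n)$ replaces $q$ by $q/g$ and the error by $(\text{error})/g$, giving $(q/g)^{m/n}\cdot(\text{error})/g = g^{-1-m/n}\,q^{m/n}\cdot(\text{error})<1$, so the reduced tuple is again a solution; infinitude survives because each reduced $(q_1,\dots,q_m)$-part admits only boundedly many compatible $(p_1,\dots,p_n)$-parts. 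With that reading your proof closes completely. Requiring $\gcd(q_1,\dots,q_m)=1$ literally is a genuinely harder problem (Diophantine approximation by primitive points) and is not used anywhere in the paper.
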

If $m=1$,
then the theorem finds one common denominator to simultaneously approximate $n$ real numbers,
\[ \|qa_i\|<q^{-1/n}\quad\text{ for all }i\leq n. \]

Next, we give a brief survey of the LLL Algorithm \cite{lenstra1982factoring}. For a more comprehensive understanding of the LLL Algorithm and the associated technique of lattice basis reduction, see, e.g., \cite{bremner2011lattice} which offers a beginner-friendly explanation of the fundamentals of the LLL Algorithm.

\subsection{The LLL Algorithm} \label{LLL}
\hfill \\
A basis $\vec{b}_1, \dots, \vec{b}_n$ of $\R^n$ spans a lattice of integer points in $\R^n$,
\[ L = \{r_1\vec{b_1} + \dots + r_n\vec{b_n}\colon r_i\in\Z\text{ for }1\leq i\leq n\}. \]
The determinant of $L$ is the absolute value of the determinant of the matrix with columns $\vec{b}_i$,
\[ \det(L) := \lvert \det(\vec{b_1} \dots \vec{b_n}) \rvert. \]
This matrix can be reduced in the sense that the basis can be chosen to be almost orthogonal. Indeed,
\[ \det(L) \leq \lvert \vec{b}_1 \rvert \cdot \lvert \vec{b}_2 \rvert \cdots \lvert \vec{b}_n \rvert, \]
with equality if and only if the basis is orthogonal.

Recall the Gramm-Schmidt process: for $i = 1, 2, \dots, n$, compute the component $\vec{b}_i^\ast$ of $\vec{b}_i$ that is
orthogonal to the previous $\vec{b}_1^\ast, \dots, \vec{b}_{i-1}^\ast$,
\[ \vec{b}_i^\ast = \vec{b}_i - \mu_{i,1}\vec{b}_1^* - \mu_{i,2}\vec{b}_2^* - \cdots - \mu_{i,i-1}\vec{b}_{i-1}^*, \]	
where
\[ \mu_{i,j} = \frac{\langle \vec{b}_i, \vec{b}_j^\ast \rangle}{\langle \vec{b}_j^\ast, \vec{b}_j^\ast \rangle}. \]
Then $\vec{b}_1^\ast, \dots, \vec{b}_n^\ast$ is an orthogonal basis for $\R^n$, and at each step, $\vec{b}_1^\ast, \dots, \vec{b}_i^\ast$ is an orthogonal basis for $\mathbb{R}\vec{b}_1 + \dots + \mathbb{R}\vec{b}_i$.

The LLL Algorithm returns a basis $\vec{b}_1, \dots, \vec{b}_n$ that is almost orthogonal in the sense that
\[ \lvert \mu_{i,j} \rvert \leq \frac{1}{2}\quad \text{ for } 1 \leq j < i \leq n, \]
and
\[ \left\lvert \vec{b}_i^\ast + \mu_{i,i-1}\vec{b}_{i-1}^\ast \right\rvert^2 \geq \alpha\left\lvert \vec{b}_{i-1}^\ast \right\rvert^2\quad \text{ for } 2 \leq i \leq n, \]
where $1/4 < \alpha < 1$ is called the {\it reduction parameter}. After running LLL with $1/4 < \alpha < 1$, the result is a basis such that
\begin{gather}
\left\lvert \vec{b}_j \right\rvert^2 \leq \beta^{i-1}\left\lvert \vec{b}_i^* \right\rvert^2\quad \text{ for } 1 \leq j \leq i \leq n, \\
\det(L) \leq \lvert \vec{b}_1 \rvert \cdots \lvert \vec{b}_n \rvert \leq \beta^{n(n-1)/4}\det(L), \\
\lvert \vec{b}_1 \rvert \leq \beta^{(n-1)/4}\det(L)^{1/n},
\end{gather}
where $\beta := 4/(4\alpha - 1)$.

\subsection{LLL Algorithm for Simultaneous Diophantine Approximation} \label{sec:LLL algorithm for simultaneous Diophantine approximation}

In \cite{bosma2013finding}, the application of the original LLL Algorithm to obtain ``efficient'' rational approximations, with a common denominator, to the real numbers $a_1, a_2, \dots, a_n$ \cite[Proposition 1.39]{lenstra1982factoring} (stated below as Proposition \ref{original simultaneous approximation by LLL}), was extended to approximate (in a precise sense that is explained below) the $n \times m$ matrix of real numbers  
\begin{equation} \label{target}
\setstackgap{L}{1.1\baselineskip}
\fixTABwidth{T}
A = \parenMatrixstack{
a_{11} & a_{12} & \cdots & a_{1m}  \\
a_{21} & a_{22} & \cdots & a_{2m}  \\
\vdots & \vdots & \ddots & \vdots \\
a_{n1} & a_{n2} & \cdots & a_{nm}
}.
\end{equation}
Specifically, as shown in \cite[Lemma 2.3]{bosma2013finding}, for a given {\it target matrix} (\ref{target}) and real number $0 < \varepsilon < 1$, the LLL Algorithm applied to a lattice in $\mathbb{R}^{n + m}$ with basis given by the columns of the following $(n + m)$-square matrix
\begin{equation} \label{basis matrix} 
B_{A,c} =
\begin{pmatrix}
  \begin{matrix}
  I_n \\
  \end{matrix}
  & \rvline & A \\
\hline
  O & \rvline &
  \begin{matrix}
  cI_{m}
  \end{matrix}
\end{pmatrix},
\end{equation}
where 
\begin{equation} \label{general c}
c = \beta^{\frac{(1 - m - n)(n + m)}{4m}}\varepsilon^{1 + \frac{n}{m}},
\end{equation}
yields an $m$-tuple $(q_1, q_2, \dots, q_m)$ of integers satisfying
\begin{equation} \label{approximation bound with beta}
\displaystyle\max_{1 \leq j \leq m}\vert q_j \vert \leq \beta^{\frac{(m + n - 1)(m + n)}{4m}}\varepsilon^{-\frac{n}{m}}
\end{equation} 
and
\begin{equation} \label{approximation bound}
\max_{1 \leq i \leq n}\| q_1a_{i1} + q_2a_{i2} + \cdots + q_ma_{im} \| \leq \varepsilon.
\end{equation}

\begin{remark}
In \cite[Lemma 2.3]{bosma2013finding}, the authors apply LLL with $\alpha = 3/4$ which make $\beta = 2$. After applying LLL with this reduction parameter, inequality \textup(\ref{approximation bound with beta}\textup) becomes
\begin{equation*}
\displaystyle\max_{1 \leq j \leq m}\vert q_j \vert \leq 2^{\frac{(m + n - 1)(m + n)}{4m}}\varepsilon^{-\frac{n}{m}}
\end{equation*} 
by replacing \textup(\ref{general c}\textup) with  
\[ c = 2^{\frac{(1 - m - n)(n + m)}{4m}}\varepsilon^{1 + \frac{n}{m}}. \]
\end{remark}

If we set $m = 1$ in the derivation above, we obtain the original application of the LLL Algorithm to the problem of simultaneously approximating two or more real numbers with rational numbers having a common denominator:
 
\begin{proposition} \label{original simultaneous approximation by LLL}
There exists a polynomial-time algorithm that, given a positive integer $n$, rational numbers $a_1, a_2, \dots, a_n, \varepsilon$ satisfying $0 < \varepsilon < 1$, and reduction parameter $1/4 < \alpha < 1$, finds integers $p_1, p_2, \dots, p_n, q$ for which
\[ \vert p_i - qa_i \vert \leq \varepsilon \text{ for } 1 \leq i \leq n, \]
\[ 1 \leq q \leq \beta^{\frac{n(n + 1)}{4}}\varepsilon^{-n}. \]
\end{proposition}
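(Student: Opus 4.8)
The plan is to read off the statement from the Bosma--Smeets construction recalled in Section~\ref{sec:LLL algorithm for simultaneous Diophantine approximation}, specialized to $m=1$. Given the rational numbers $a_1,\dots,a_n$, form the $n\times1$ target matrix $A=(a_1,\dots,a_n)^{T}$ (this is \eqref{target} with $m=1$), set
\[ c = \beta^{-n(n+1)/4}\varepsilon^{n+1} \]
(which is \eqref{general c} with $m=1$), and apply the LLL Algorithm with reduction parameter $\alpha$ to the lattice $L\subset\R^{n+1}$ spanned by the columns of the matrix $B_{A,c}$ from \eqref{basis matrix}. That matrix is upper triangular with diagonal entries $1,\dots,1,c$, so $\det(L)=c$.

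Next I would invoke the last of the three LLL bounds displayed in Section~\ref{LLL}, applied with the ambient dimension $n+1$ in place of $n$: the first vector $\vec b_1$ of the reduced basis satisfies
\[ |\vec b_1| \le \beta^{n/4}\det(L)^{1/(n+1)} = \beta^{n/4}\bigl(\beta^{-n(n+1)/4}\varepsilon^{n+1}\bigr)^{1/(n+1)} = \varepsilon, \]
the powers of $\beta$ cancelling exactly because of the choice of $c$. Writing $\vec b_1$ as an integer combination of the columns of $B_{A,c}$ yields integers $p_1,\dots,p_n,q$ with $\vec b_1=(qa_1-p_1,\ \dots,\ qa_n-p_n,\ qc)^{T}$, after absorbing signs. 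Bounding each coordinate of $\vec b_1$ by $|\vec b_1|$ gives $|qa_i-p_i|\le\varepsilon$ for $1\le i\le n$ together with $|qc|\le\varepsilon$, and the latter rearranges to $|q|\le\varepsilon/c=\beta^{n(n+1)/4}\varepsilon^{-n}$.

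To finish, note that $q\ne0$: if $q=0$ then $\vec b_1=(-p_1,\dots,-p_n,0)^{T}$ has integer coordinates and norm $<1$, hence is the zero vector, which is impossible for a member of a basis. After replacing $\vec b_1$ by $-\vec b_1$ if necessary we may take $q\ge1$, so $1\le q\le\beta^{n(n+1)/4}\varepsilon^{-n}$. The procedure runs in polynomial time because LLL does, in the dimension $n+1$ and the bit-size of the entries of $B_{A,c}$ (which may be taken rational, cf.\ \cite{lenstra1982factoring}).

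The only point I expect to require genuine care is the exponent bookkeeping: checking that the power of $\beta$ in the LLL bound cancels the power of $\beta$ in $c$ to leave exactly $\varepsilon$, and that the bottom coordinate $qc$ then turns $|\vec b_1|\le\varepsilon$ into precisely $q\le\beta^{n(n+1)/4}\varepsilon^{-n}$. The remaining ingredients---nonvanishing of $q$ and the polynomial running time---are immediate from the properties of LLL stated in Section~\ref{LLL}. Indeed, the whole assertion is nothing but the $m=1$ case of \cite[Lemma 2.3]{bosma2013finding} recorded above, so one could alternatively just cite that.
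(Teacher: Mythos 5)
Your proposal is correct and follows exactly the route the paper intends: the paper gives no separate proof, merely observing that the proposition is the $m=1$ specialization of the Bosma--Smeets construction (equivalently \cite[Proposition 1.39]{lenstra1982factoring}), and your exponent bookkeeping, the nonvanishing of $q$, and the coordinate-reading of $\vec b_1$ all check out.
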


In \cite{bosma2013finding}, the authors present an algorithm that generates finitely many nondecreasingly good rational approximations by iterating the LLL algorithm, which they call the Iterated LLL (ILLL) Algorithm. After each application of the LLL algorithm, the value of $c$ is decreased before it is applied again, giving another approximation that is the same or better than the previous one. The ILLL Algorithm, as presented in \cite[Section 3]{bosma2013finding}, specifies the reduction parameter $\alpha = 3/4$, $\varepsilon = 1/2$, and the amount that $c$ is decreased by after each iteration. In the next section, we restate their algorithm in more generality. Furthermore, in Theorem \ref{main theorem}, we present the corresponding version of \cite[Theorem 3.5]{bosma2013finding} on the guaranteed quality of approximations from the ILLL Algorithm, which when restricted to the special case of the ILLL Algorithm in \cite[Section 3]{bosma2013finding}, gives a smaller bound on the error of approximations.

\section{Main Result} \label{sec:main result}

In this section, we begin with a survey of the Iterated LLL (ILLL) Algorithm (with $\alpha = 3/4$) by Bosma and Smeets \cite[Section 3]{bosma2013finding}, which by iterating the original LLL Algorithm, generates a finite sequence of approximations to a target matrix $A$. Then, we present an improvement of their main result concerning the efficiency of the ILLL Algorithm. Note that our survey of the ILLL Algorithm and improvement of \cite[Theorem 3.5]{bosma2013finding} leaves the reduction parameter $1/4 < \alpha < 1$ unspecified in the interest of generality.

\begin{algorithm}
\caption{ILLL Algorithm \cite[Algorithm 3.1]{bosma2013finding}}
\begin{algorithmic}[1]
\Require A target matrix $A$ as given in (\ref{target}), real numbers $\varepsilon, d$ satisfying $0 < \varepsilon < 1 < d$, a choice of reduction parameter $1/4 < \alpha < 1$, and an upper bound $q_{\text{max}} > 1$.
\Ensure For each integer $1 \leq k \leq k'$ ($k'$ as in (\ref{k'}) below), an $m$-tuple $q(k) = (q_1(k), q_2(k), \dots, q_m(k))$ satisfying
\begin{equation} \label{k denominator bound}
\max_{1 \leq j \leq m}\vert q_j(k) \vert \leq \beta^{\frac{(m + n - 1)(m + n)}{4m}}(\varepsilon d^{1 - k})^{-\frac{n}{m}},
\end{equation}
and
\begin{equation} \label{k approximation bound}
\max_{1 \leq i \leq n}\| q_1(k)a_{i1} + q_2(k)a_{i2} + \cdots + q_m(k)a_{im} \| \leq \varepsilon d^{1 - k}. 
\end{equation}
\State Construct the basis matrix $B_{A,c(\varepsilon)}$ as given in (\ref{basis matrix}) from the target matrix $A$.
\State Apply the LLL Algorithm to $B_{A,c(\varepsilon)}$. \label{step:apply LLL} 
\State Deduce the $m$-tuple $(q_1, q_2, \dots, q_m)$. \label{step:deduce denominators}
\State Replace $c$ by $cd^{-\frac{n + m}{m}}$. \label{step:replace}
	\If{$\displaystyle\max_{1 \leq j \leq m}\vert q_j \vert \leq q_{\text{max}}$}
            \State go to Step \ref{step:apply LLL} \label{step:go to apply LLL}
         \Else
            \State stop
         \EndIf
\end{algorithmic}
\label{alg:ILLL}
\end{algorithm}

In the first iteration of the ILLL Algorithm (see Algorithm \ref{alg:ILLL}), after Step 3, the $m$-tuple $q(1) = (q_1(1), q_2(1), \dots, q_m(1))$ satisfies inequalities (\ref{approximation bound with beta}) and (\ref{approximation bound}) \cite[Lemma 3.4]{bosma2013finding}. After Step 4, the basis matrix $B_{A,c(\varepsilon)}$ is updated with $B_{A,c(\varepsilon d^{-1})}$. Then, if Step \ref{step:apply LLL} is reached, Step \ref{step:deduce denominators} in the second iteration yields the $m$-tuple $q(2) = (q_1(2), q_2(2), \dots, q_m(2))$ that satisfies
\[ \max_{1 \leq j \leq m}\vert q_j(2) \vert \leq \beta^{\frac{(m + n - 1)(m + n)}{4m}}(\varepsilon d^{-1})^{-\frac{n}{m}}, \]
and
\[ \max_{1 \leq i \leq n}\| q_1(2)a_{i1} + q_2(2)a_{i2} + \cdots + q_m(2)a_{im} \| \leq \varepsilon d^{-1}. \] 
Therefore, after Step 4 in the $(k - 1)\text{st}$ iteration, the basis matrix is updated to $B_{A,c(\varepsilon d^{1 - k})}$ so that if Step \ref{step:apply LLL} is reached, Step \ref{step:deduce denominators} in the $k\text{th}$ iteration yields the $m$-tuple $q(k) = (q_1(k), q_2(k), \dots, q_m(k))$ that satisfies inequalities (\ref{k denominator bound}) and (\ref{k approximation bound}).

The number of times the ILLL Algorithm calls upon the LLL Algorithm is obtained by solving the inequality
\[ q_{\text{max}} \leq \beta^{\frac{(m + n - 1)(m + n)}{4m}}(\varepsilon d^{1 - k})^{-\frac{n}{m}} \]
for $k$:
\begin{gather*}
\beta^{-\frac{(m + n - 1)(m + n)}{4m}}q_{\text{max}} \leq (\varepsilon d)^{-\frac{n}{m}}d^{\frac{kn}{m}} \\
\beta^{-\frac{(m + n - 1)(m + n)}{4m}}(\varepsilon d)^{\frac{n}{m}}q_{\text{max}} \leq d^{\frac{kn}{m}} \\
\frac{m}{n}\log_d(q_{\text{max}}) + 1 + \log_d(\varepsilon) - \frac{(m + n - 1)(m + n)}{4n}\log_d(\beta) \leq k.
\end{gather*}

If in the $k\text{th}$ iteration, $\max_{1 \leq j \leq m}\vert q_j(k) \vert > \beta^{\frac{(m + n - 1)(m + n)}{4m}}(\varepsilon d^{1 - k})^{-\frac{n}{m}}$, the algorithm stops. Therefore, the algorithm stops after 
\begin{equation} \label{k'} k' = \left\lceil \frac{m}{n}\log_d(q_{\text{max}}) + 1 + \log_d(\varepsilon) - \frac{(m + n - 1)(m + n)}{4n}\log_d(\beta) \right\rceil
\end{equation}
iterations.

\begin{remark}
To ease notation, the authors of \cite[Algorithm 3.1]{bosma2013finding} set $\varepsilon = \frac{1}{d}$ which turns (\ref{k denominator bound}) and (\ref{k approximation bound}) into
\begin{equation}
\max_{1 \leq j \leq m}\vert q_j(k) \vert \leq \beta^{\frac{(m + n - 1)(m + n)}{4m}}d^{\frac{kn}{m}},
\end{equation}
and
\begin{equation}
\max_{1 \leq i \leq n}\| q_1(k)a_{i1} + q_2(k)a_{i2} + \cdots + q_m(k)a_{im} \| \leq d^{-k}, 
\end{equation}
respectively \textup(see \cite[Remark 3.2]{bosma2013finding}\textup). Furthermore, by \cite[Lemma 3.3]{bosma2013finding}, putting $d = 2$ makes  
\[ k' = \left\lceil \frac{m\log_2 q_{\text{max}}}{n} - \frac{(m + n - 1)(m + n)}{4n}\log_2(\beta) \right\rceil. \]
\end{remark}

We now conclude with the main result of the present paper.

\begin{theorem} \label{main theorem}
Let an $n \times m$-matrix $A$ with entries $a_{ij}$ in $\mathbb{R}$. The \textup{ILLL} algorithm with reduction parameter $1/4 < \alpha < 1$, and $0 < \varepsilon < 1 < d$, finds a sequence of $m$-tuples $(q_1, \dots, q_m)$ of integers such that for every $Q$ with $\varepsilon^{-\frac{n}{m}}\beta^{\frac{(m + n - 1)(m + n)}{4m}} \leq Q \leq q_{\textup{max}}$, one of these $m$-tuples satisfies
\begin{equation}
\max_{1 \leq j \leq m}\lvert q_j \rvert \leq Q,\quad \textup{and}
\end{equation}
\begin{equation} \label{bound2}
\max_{1 \leq i \leq n}\| q_1a_{i1} + \cdots + q_ma_{im} \| \leq dQ^{-\frac{m}{n}}\beta^{\frac{(m + n - 1)(m + n)}{4n}}.
\end{equation} 
\end{theorem}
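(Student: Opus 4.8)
The plan is to reduce the statement to a single, well-chosen iteration of Algorithm~\ref{alg:ILLL}: given an admissible $Q$, I would pick the index $k$ whose guaranteed output $q(k)$ already meets both requirements, and then verify the two inequalities by unwinding (\ref{k denominator bound}) and (\ref{k approximation bound}). Throughout, write $C := \beta^{\frac{(m+n-1)(m+n)}{4m}}$, so that the per-iteration bounds read $\max_j \lvert q_j(k)\rvert \le C(\varepsilon d^{1-k})^{-n/m}$ and $\max_i \lVert q_1(k)a_{i1}+\cdots+q_m(k)a_{im}\rVert \le \varepsilon d^{1-k}$; the elementary identity $C^{m/n} = \beta^{\frac{(m+n-1)(m+n)}{4n}}$ shows that $C^{m/n}$ is exactly the constant appearing in (\ref{bound2}), and the hypothesis on $Q$ becomes simply $\varepsilon^{-n/m} C \le Q \le q_{\textup{max}}$.

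The first computation I would record is the equivalence, obtained by taking $\log_d$ and isolating $k$,
\[
C(\varepsilon d^{1-k})^{-n/m} \le Q \quad\Longleftrightarrow\quad k \le 1 + \log_d\varepsilon + \tfrac{m}{n}\log_d(Q/C),
\]
which dictates the choice
\[
k := \left\lfloor\, 1 + \log_d\varepsilon + \tfrac{m}{n}\log_d(Q/C) \,\right\rfloor .
\]
Next I would check that this $k$ is one of the indices the algorithm actually reaches, i.e.\ $1 \le k \le k'$. The inequality $k \ge 1$ follows from $Q \ge \varepsilon^{-n/m}C$, which forces $\tfrac{m}{n}\log_d(Q/C) \ge -\log_d\varepsilon$ and hence makes the argument of the floor at least $1$. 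The inequality $k \le k'$ follows from $Q \le q_{\textup{max}}$ together with (\ref{k'}), after observing that $\tfrac{m}{n}\log_d C = \tfrac{(m+n-1)(m+n)}{4n}\log_d\beta$, so that $k'$ is precisely $\bigl\lceil 1 + \log_d\varepsilon + \tfrac{m}{n}\log_d(q_{\textup{max}}/C)\bigr\rceil$.

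With $k$ fixed, the denominator bound is immediate from the displayed equivalence and the definition of $k$: $\max_j \lvert q_j(k)\rvert \le C(\varepsilon d^{1-k})^{-n/m} \le Q$. For the approximation bound I would use the floor in the other direction, namely $k > \log_d\varepsilon + \tfrac{m}{n}\log_d(Q/C)$, which rearranges to $\varepsilon d^{1-k} < d\,(Q/C)^{-m/n} = d\,Q^{-m/n}C^{m/n}$; combined with (\ref{k approximation bound}) and the identity $C^{m/n} = \beta^{\frac{(m+n-1)(m+n)}{4n}}$, this is exactly (\ref{bound2}). The argument has no genuine obstacle — it is a direct extraction from the iteration guarantees already established for Algorithm~\ref{alg:ILLL} — so the only points demanding care are the index bookkeeping that places the chosen $k$ inside the executed range $\{1,\dots,k'\}$, and the constant-matching step identifying $C^{m/n}$ with the exponent of $\beta$ written in the statement.
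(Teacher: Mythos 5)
Your proof is correct and follows essentially the same route as the paper: it selects the iteration index $k$ determined by $Q$ (your $k=\lfloor 1+\log_d\varepsilon+\tfrac{m}{n}\log_d(Q/C)\rfloor$ is exactly the unique integer satisfying the paper's defining inequality (\ref{ineq1})), confirms $k\le k'$ from $Q\le q_{\textup{max}}$, and reads off both bounds from (\ref{k denominator bound}) and (\ref{k approximation bound}). The only difference is cosmetic: you make the choice of $k$ explicit via a floor and verify $k\ge 1$ from the lower bound on $Q$, a step the paper leaves implicit.
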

\begin{proof}
Take $k \in \mathbb{N}$ such that
\begin{equation} \label{ineq1}
d^{(k - 1)\frac{n}{m}} \leq Q\varepsilon^{\frac{n}{m}}\beta^{\frac{(1 - m - n)(m + n)}{4m}}< d^{k\frac{n}{m}}.
\end{equation}
By our assumption that $Q \leq q_{\textup{max}}$ and by the left-hand side of (\ref{ineq1}), we are guaranteed that $k \leq k'$ so that we may invoke \cite[Lemma 3.4]{bosma2013finding} to assert that $q(k) = (q_1(k), \dots, q_m(k))$ satisfies the inequality
\[ \max_{1 \leq j \leq m}\vert q_j(k) \vert \leq \beta^{\frac{(m + n - 1)(m + n)}{4m}}(\varepsilon d^{1 - k})^{-\frac{n}{m}} \leq Q. \]
From the right-hand side of (\ref{ineq1}) it follows that
\begin{equation}\label{ineq2}
\varepsilon d^{1 - k} < dQ^{-\frac{m}{n}}\beta^{\frac{(m + n - 1)(m + n)}{4n}}.
\end{equation}
From (\ref{ineq2}) and \cite[Lemma 3.4]{bosma2013finding} we know that
\[ \max_{1 \leq i \leq n}\| q_1a_{i1} + \cdots + q_ma_{im} \| \leq \varepsilon d^{1 - k} < dQ^{-\frac{m}{n}}\beta^{\frac{(m + n - 1)(m + n)}{4n}}. \]  
\end{proof}

We note that \cite[Theorem 3.5]{bosma2013finding} concerns the special case of the ILLL Algorithm with $\varepsilon = 1/2$ (so that $\beta = 2$) and $d = 2$, which gives the error bound  
\begin{align*}
\max_{1 \leq i \leq n}\| q_1a_{i1} + \cdots + q_ma_{im} \| &\leq Q^{-\frac{m}{n}}2^{\frac{(m + n + 3)(m + n)}{4n}} \\
&= Q^{-\frac{m}{n}}2^{\frac{(m + n - 1)(m + n)}{4n} + 1}2^{\frac{m}{n}}.
\end{align*}
When restricted to this case, the error bound in (\ref{bound2}) is smaller than the one above by the factor $2^{\frac{m}{n}}$. Indeed, Theorem \ref{main theorem} is a significant improvement of \cite[Theorem 3.5]{bosma2013finding}. 



\end{document}